\newtheorem{theorem}{Theorem}
\newtheorem{lemma}[theorem]{Lemma}
\newtheorem{prop}[theorem]{Proposition}
\newcommand{\lt}{\left}
\newcommand{\rt}{\right}
\newcommand{\bpm}{\begin{pmatrix}}
\newcommand{\epm}{\end{pmatrix}}
\newcommand{\bsm}{\lt(\begin{smallmatrix}}
\newcommand{\esm}{\end{smallmatrix}\rt)}
\newcommand{\beq}{\begin{equation}}
\newcommand{\eeq}{\end{equation}}
\renewcommand{\d}{\mathrm{d}}
\newcommand{\Z}{\ensuremath{\mathbb{Z}}}
\newcommand{\Q}{\ensuremath{\mathbb{Q}}}
\newcommand{\R}{\ensuremath{\mathbb{R}}}
\newcommand{\C}{\ensuremath{\mathbb{C}}}
\renewcommand{\H}{\ensuremath{\mathbb{H}}}
\renewcommand{\a}{\mathfrak{a}}
\newcommand{\vep}{\varepsilon}
\DeclareMathOperator{\GL}{GL}
\DeclareMathOperator{\SL}{SL}
\newcommand{\aquad}{\qquad\qquad}
\renewcommand{\th}{\textsuperscript{th}}
\newcommand{\inv}{^{-1}}
\newcommand{\hf}{\frac{1}{2}}
\newcommand{\qtr}{\frac{1}{4}}
\renewcommand{\Im}{\operatorname{Im}}
\newcommand{\kron}[2]{\lt(\frac{#1}{#2}\rt)}
\newcommand{\tf}{\tilde{f}}
\newcommand{\tA}{\tilde{A}}
\newcommand{\tW}{\tilde{W}}
\newcommand{\tF}{\tilde{F}}
\newcommand{\ttau}{\tilde{\tau}}
\renewcommand{\k}{\kappa}
\newcommand{\Vol}{\operatorname{Vol}}
\newcommand{\V}{\mathcal{V}}
\newcommand{\B}{\mathcal{B}}
\title{Bounds on Sup-Norms of Half Integral Weight Modular Forms}
\newcommand\blfootnote[1]{
\begingroup
\renewcommand\thefootnote{}\footnote{#1}
\addtocounter{footnote}{-1}
\endgroup
}
\begin{document}
\author{Eren Mehmet K{\i}ral}
\date{}
\maketitle

\begin{abstract}
	Bounding sup-norms of modular forms in terms of the level has been the focus of much recent study, \cite{blomer2010bounding}, \cite{TemplierSup3}. In this work the sup norm of a half integral weight cusp form is bounded in terms of the level, we prove that $\|y^{\frac{\k}{2}}\tf\|_\infty \ll_{\vep,\k} N^{\hf -\frac{1}{18}+\vep}\|y^{\frac{\k}{2}}\tf\|_{L^2}$ for a modular form $\tf$ of level $4N$ and weight $\k$, a half integer.\blfootnote{2010 \emph{Mathematics Subject Classification}: Primary 11F03; Secondary 11F37.}
\blfootnote{\emph{Key words and phrases}: $L^\infty$, Sup-Norm, half-integral weight, hyperbolic lattice count, amplification.}
\end{abstract}

%NEW SECTION
%NEW SECTION
%NEW SECTION
%NEW SECTION
\section{Introduction}\label{sec:introduction}

	Modular forms are basic objects, which have proved invaluable to the theory of $L$-functions and analytic number theory. In its simplest form, they can be regarded as functions on the upper half plane $\H$ which satisfy prescribed transformation formulas under a discrete group of isometries $\Gamma$. In this work we will bound the sup-norm of $y^{\frac \k2}|\tf(z)|$ in terms of its $L^2$ norm, where $\tf$ is a half integer weight modular form of weight $\k$ with respect to the arithmetic group $\Gamma = \Gamma_0(4N)$. This yields an understanding how $\tf$ behaves as a function. We are interested in inequalities of the following form
	\[
		\|y^{\frac\k2}\tf\|_{\infty} \ll_{\vep} N^{\alpha + \vep}\|y^{\frac \k2}\tf\|_{L^2}.
	\]
	If one can prove this inequality for a smaller $\alpha$, this can be interpreted as $\tf$ being more equally distributed. If we normalize the volume of $\Gamma \backslash \H$ to one, then we cannot expect any value lower than $\alpha = 0$ for the above inequality, because the bulk of the function has to be stored somewhere.
	
	In the theory of automorphic forms, the study of sup-norms has started with \cite{iwaniec1995norms}, where Iwaniec and Sarnak bounded the sup-norm of a Maass form $f$ with respect to a power of its Laplacian eigenvalue. In recent work, see \cite{jorgenson2013uniform}, \cite{blomer2010bounding}, \cite{TemplierSup3}, several authors have bounded the sup norm of integral weight modular forms with respect to their to their level. Jorgenson and Kramer used heat kernel methods in \cite{jorgenson2004bounding} to compare the Arakelov metric with the hyperbolic metric. As a corollary they were able to obtain the bound $\|yf\|_\infty \ll \|yf\|_{L^2}N^{\hf + \vep}$ for weight 2 holomorphic modular forms $f$ of trivial nebentypus. Later in \cite{jorgenson2013uniform} this was generalized to arbitrary even weight. Their result is in fact very general and applies to hyperbolic surfaces with finite volume and their covers of arbitrary degree. Because of its generality we can think of the $N^{\hf+\vep}$ bound as a geometric one, and try to achieve better results by restricting ourselves to the case of a hyperbolic surface obtained from a congruence subgroup $\Gamma_0(4N)$. The $N^{\hf + \vep}$ bound can be easily seen to hold in the case when $\tf$ is of half integral weight and where $N$ is squarefree and odd. We show this in Theorem \ref{thm:trivialBound} using the Fourier expansions of $\tf$.
	
	For arithmetic surfaces $\Gamma_0(N)\backslash\H$ with squarefree $N$, Harcos and Templier have achieved the bound with $N^{\hf -\frac16 +\vep}$, for all integer weights with squarefree level $N$ and arbitrary nebentypus $\eta$ in \cite{TemplierSup3}. Furthermore, Templier in \cite{templier2012large} indicates that $\alpha = \qtr$ is the optimal value in general. He gives a family of weight $2$ cuspidal newforms with the level square of a prime.

	Let $\kappa$ be a half-integer, and $\tf$ a cuspidal modular form of level $4N$, weight $\k$ and nebentypus $\eta$. The function $y^{\k/2} |\tf(z)|$ is bounded on the upper half plane. Furthermore assume that $\tf$ is an eigenfunction of all the half-integer weight Hecke operators. These definitions will be further detailed in Section \ref{sec:notation}. We prove the following theorem comparing the sup norm and the $L^2$-norms of a modular form of half integer weight. The proof also applies to integer weight modular forms.
	
	%theorem begins
	%theorem begins
	\begin{theorem}\label{thm:supBound}
		Let $\tf$ be a modular, cuspidal Hecke eigenform of weight $\k$, level $4N$, nebentypus $\eta$ where $\k$ is a half integer and $N$ is odd, squarefree.  One has the following inequality,
		\[
			\|y^{\k/2}\tf\|_\infty \ll_{\vep,\k}  \|y^{\k/2}\tf\|_{L^2}N^{\hf - \frac{1}{18}+\vep}.
		\] 
		Here $\vep$ is any positive real quantity.
	\end{theorem}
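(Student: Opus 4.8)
The plan is to bound the point value by the Iwaniec--Sarnak amplification method, adapted to the metaplectic (theta-multiplier) setting, with the geometric side handled by a hyperbolic lattice count. First I would fix a point $z_0 = x_0 + i y_0$ in a fundamental domain for $\Gamma_0(4N)$ and, after normalizing $\|y^{\k/2}\tf\|_{L^2} = 1$, aim to bound $y_0^{\k}|\tf(z_0)|^2$. If $z_0$ lies deep in a cusp (i.e. $y_0$ is large relative to a power of $N$), the Fourier-expansion estimate already recorded in Theorem~\ref{thm:trivialBound} beats the target exponent, so I may assume $y_0$ is bounded from above. On the remaining region I would introduce a point-pair invariant kernel $k(z,w)$, chosen to reproduce weight-$\k$ holomorphic forms, and its $\Gamma_0(4N)$-automorphization twisted by the nebentypus and the half-integral weight theta multiplier, $K(z,w) = \sum_{\gamma \in \Gamma_0(4N)} \overline{\eta(\gamma)}\, j(\gamma,z)^{-\k} k(\gamma z, w)$. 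Spectrally expanding $K$ over an orthonormal basis of cusp forms together with the continuous spectrum gives a non-negative pre-trace expansion, which isolates $|y_0^{\k/2}\tf(z_0)|^2$ with a positive spectral weight.

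Next I would amplify. Since $\tf$ is an eigenform of all the half-integral weight Hecke operators $T_{p^2}$, I would choose amplifier coefficients $(x_p)$ supported on primes $p \le P$ with $(p,4N)=1$ so that the amplified eigenvalue $A = \sum_p x_p \lambda(p^2)$ satisfies $A \gg P^{1/2-\vep}$ relative to the amplifier's $\ell^2$-length; this uses the standard fact that $|\lambda(p^2)|$ cannot be small too often, which for half-integral weight follows from the Hecke relations via the Shimura lift. Applying the amplifier to the pre-trace inequality replaces the single group $\Gamma_0(4N)$ by a weighted sum over the Hecke correspondences attached to $T_{p^2}T_{q^2}$, that is, over integer matrices $\gamma = \begin{pmatrix} a & b \\ c & d \end{pmatrix}$ of determinant $p^2 q^2$ with $4N \mid c$, yielding a bound of the shape $A^2\, y_0^{\k}|\tf(z_0)|^2 \ll \sum_{p,q \le P} |x_p x_q| \sum_{\gamma} |k(\gamma z_0, z_0)|$.

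The crux is the geometric count. For each determinant $n = p^2 q^2 \le P^4$ I would count the matrices $\gamma$ with $4N\mid c$ for which $k(\gamma z_0, z_0) \ne 0$, i.e. those moving $z_0$ within the concentration width $\delta$ of the kernel, equivalently making the indefinite form $|c z_0^2 + (d-a) z_0 - b|$ small. I would split off the upper-triangular terms $c = 0$ (the ``diagonal''), which reproduce the trivial $N^{1/2}$ bound after normalization by $A$, from the terms $c \ne 0$; for the latter the divisibility $4N \mid c$ forces $|c| \ge 4N$, and counting the lattice points $(a,b,c,d)$ on the determinant surface inside the box cut out by $u(\gamma z_0, z_0) \ll \delta$ produces a saving by a positive power of $N$. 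This hyperbolic lattice count, carried out through the standard parametrization of the distance by the quadratic form above, is the technical heart of the argument; it is also where half-integral weight is more expensive, since the Hecke determinants are the squares $p^2 q^2$ and so run up to $P^4$ while the amplifier only gains $P$, thinning its effective length and accounting for the weaker exponent $\tfrac1{18}$ compared with the integer-weight $\tfrac16$.

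Finally I would optimize. Balancing the off-diagonal contribution (amplifier length $\times$ per-determinant count $\times$ the $N$-power saving from $|c|\ge 4N$) against the diagonal contribution and the constraint $A^2 \gg P^{1-\vep}$, while tuning the kernel width $\delta$, yields after a short computation the exponent $\tfrac12 - \tfrac1{18} + \vep$; reinstating the excluded cusp region via Theorem~\ref{thm:trivialBound} completes the proof. I expect the off-diagonal lattice count together with this optimization to be the main obstacle; the passage to the metaplectic pre-trace formula and the choice of amplifier, while requiring care with the theta multiplier, are otherwise a routine adaptation of the method of Iwaniec--Sarnak and Harcos--Templier.
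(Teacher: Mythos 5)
Your skeleton (amplified metaplectic pre-trace inequality plus a hyperbolic lattice count, with a Fourier-expansion bound near the cusp) is the same as the paper's, but the proposal has two genuine gaps, and they occur precisely at the two places where the half-integral weight case requires a new idea. The first is the reduction of the point $z_0$. You work in a fundamental domain of $\Gamma_0(4N)$ and dichotomize on $y_0$: large $y_0$ via the Fourier expansion, bounded $y_0$ via the amplified kernel. (Incidentally, the cusp regime needs Proposition \ref{prop:fourierBound}, whose bound decays like $y^{-1/2}$, not Theorem \ref{thm:trivialBound}, which is the uniform $N^{\hf+\vep}$ bound and never beats the target.) This dichotomy does not cover the domain: $\Gamma_0(4N)\backslash\H$ has many cusps besides $\infty$, and near the cusp $0$ the fundamental domain contains points $z_0 = i\epsilon$ with $\epsilon$ arbitrarily small. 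There the Fourier bound at $\infty$ is useless, and your geometric count fails as well: the matrices $\bsm 1&0\\4Nk&1\esm$ with $1 \leq k \ll 1/(N\epsilon)$ have $4N \mid c$, $c \neq 0$, determinant $1$, and satisfy $u(\gamma z_0, z_0) = 4N^2k^2\epsilon^2 \ll 1$, so the number of admissible matrices is unbounded in terms of $N$; the claim that $|c|\geq 4N$ forces a power saving is false at such points. The counting input (Lemma \ref{lem:harcosTemplier}) is valid only for $z \in \mathcal{F}(2N)$, i.e.\ under the diophantine conditions \eqref{eqn:diophantine}, and moving $z_0$ into $\mathcal{F}(2N)$ is exactly where half-integral weight hurts: there is no Atkin--Lehner operator at $2$, and the odd ones $\tW(Q)$ change the nebentypus. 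The paper's solution (Theorem \ref{thm:atkinLehner}) is to bound simultaneously the whole family $\B' = \B \cup \B|_\k[A]\cup\B|_\k[W(2)]\cup\B|_\k[AW(2)]$, $A = \bsm 1&0\\2N&1\esm$, running the amplification also on the slashed kernel \eqref{eq:slashedAutomorphicKernel} with the conjugated Hecke operators $T^B_{\k,\eta}$. Nothing in your write-up plays this role, and without it the argument collapses off the cusp $\infty$.

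The second gap is the amplifier. You support it only on $\{p^2 : p \le P\}$ and assert $\sum_{p\le P}|\lambda(p^2)| \gg P^{1-\vep}$ because ``$|\lambda(p^2)|$ cannot be small too often.'' This is not a standard fact: via the Shimura lift it amounts to a lower bound on $\sum_{p\le P}\lambda_F(p)^2$ for a $\GL_2$ form $F$ of conductor comparable to $N^2$, with $P$ only a small power of $N$, which would require effective, level-uniform nonvanishing/zero-free information at the edge of the critical strip for $\mathrm{Sym}^2 F$ --- far from routine, and not something the Hecke relations alone give. The robust substitute, used in the paper, is the pointwise relation $\ttau_{\tf}(p^2)^2 - \ttau_{\tf}(p^4) = \eta(p^2)$ (from Purkait's Hecke relations), which forces $\max\{|\ttau_{\tf}(p^2)|,|\ttau_{\tf}(p^4)|\} \ge \hf$ and hence compels an amplifier supported on squares \emph{and} fourth powers. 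That is exactly why the determinants run up to $\Lambda^8$ and why the optimization ($\Lambda = N^{1/9}$, transition height $y \asymp N^{-8/9}$) yields $\hf - \frac{1}{18}$. Your accounting is internally inconsistent here: with determinants only up to $P^4$ and amplifier gain $P$, the same optimization gives $P \asymp (Ny^2)^{-1/3}$, transition height $N^{-4/5}$, and the exponent $\hf - \frac{1}{10}$, not $\hf - \frac{1}{18}$. Landing on $\frac{1}{18}$ is only possible with the longer square-plus-fourth-power amplifier, which your proposal does not contain; so either your amplifier claim is unjustified, or your final exponent is not the one your setup produces.
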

	%theorem ends
	%theorem ends

	In proving Theorem \ref{thm:supBound} we will closely follow the method in \cite{TemplierSup2} using spectral expansions of half-integer weight automorphic kernels and the amplification method of Duke-Friedlander-Iwaniec. The new difficulty in the half-integral weight case that we overcome is the fact that in this context the Hecke operators are only supported on the square integers. This results in the weaker bound of $N^{\hf-\frac{1}{18}+\vep}$ rather than $N^{\hf-\frac{1}{6} + \vep}$, see the sketch of proof below. Secondly, the Atkin-Lehner theory of modular forms is different, with no involution at the prime $2$. We overcome this difficulty by bounding simultaneously a whole basis of Hecke eigenfunctions as well as $\tf|_\k[W(2)]$ and $\tf|_\k[\bsm 1&0\\2N&1\esm]$. This is done with the same method one uses to bound $\tf(z)$. We then show in Theorem \ref{thm:atkinLehner} that one may assume $z$ to lie in a fundamental set for the group $A_0(2N)$ which generated by the group $\Gamma_0(2N)$ and the corresponding Atkin-Lehner involutions.
	
	The above argument could also be applied to the proof in \cite{TemplierSup3}. We would like to remark that this slightly extends the domain of validity of the $\|fy^{\frac k2}\|_{L^\infty} \ll \|fy^{\frac k2}\|_{L^2} N^{\hf - \frac 16 + \vep}$ bound also to automorphic forms $f$ of weight $k$ level $4N$ with $N$ squarefree and odd. 
	
	 After the writing of this paper the referee has brought to my attention the preprint of Abhishek Saha, \cite{saha2014sup} where he bounds the sup-norm of level $N$ Maass forms by $N^{\hf - \frac{1}{12} + \vep}$. The novelty of this result over \cite{TemplierSup3} is that in this case $N$ does not have to be squarefree. In our case, also because of the $4$, we have a non-squarefree level. But since the square part of the level is fixed, we do not have to consider growth in the square part of the level. One common method, other than the general outline of \cite{TemplierSup2}, used in both \cite{saha2014sup} and in this paper is the consideration of bounding the modular form together with $f|[B]$ for some matrices $B \in \GL_2(\Q)$. Such a consideration is necessary since Atkin-Lehner operators are not enough to bring a modular form to a desired fundamental domain, and hence we bound all elements in $\mathcal{B}'$, defined in Theorem \ref{thm:atkinLehner}. At this point in order to bound $f|[B]$, Saha considers an automorphic kernel on a different group and a thinner set of Hecke operators, whereas we use the same kernel slashed in either variable as in \eqref{eq:slashedAutomorphicKernel} and different operators $T_{\k,\eta}^B$. 
	
	Here is a brief sketch of the proof. Let $K(z,w)$ be a half integral weight automorphic kernel as defined in Section \ref{subsec:automorphicKernel}. It is automorphic of weight $\k$ in the $z$ variable and of weight $-\k$ in the $w$ variable and has a spectral expansion as
	\[
		K(z,w) = \sum_{j} h(t_j) \tF_j(z)\overline{\tF_j(w)} + \text{ cts.}
	\]
	Here $\tF_j(z)$ are weight $\k$ Maass forms on the surface $\Gamma_0(4N)\backslash \H$. If you apply the Hecke operator $T_{\k,\eta}(\ell^2)$ on the $z$ variable, you obtain
	\[
		T_{\k,\eta}(\ell^2)K(z,w) = \sum_j h(t_j) \widetilde{\lambda}_j(\ell^2)\tF_j(z) \overline{\tF_j(w)} +\text{ cts.}
	\]
	After taking $z=w$, the quantity $|\tF_j(z)|^2$ appears on the right hand side. We consider a summation over $T_{\k,\eta}(\ell^2)K(z,w)$ with well chosen weights for each $\ell$, which \emph{amplifies} a single choice of $|\tF_j(z)|^2$. Then we ignore all other terms and a bound for the summation becomes a bound for $|\tF_j(z)|^2$. This is done in section \ref{subsec:amplification}.

%NEW SECTION
%NEW SECTION
%NEW SECTION
%NEW SECTION

\section{Notation}\label{sec:notation}

	For any complex number $z \in \C$, call
	\[
		e(z) = e^{2\pi i z}.
	\]
	
	Let $\H = \{z \in \C: \Im z >0\}$ be the upper half plane. If $z = x + iy$ is the variable used to denote an element of $\H$, the hyperbolic volume element in $\H$ is denoted by
	\[
		\d \mu (z) := \frac{\d x\d y}{y^2}.
	\]
	
	We will use the distance parameter
	\[
		u(z,w) = \frac{|z-w|^2}{4\Im(z) \Im(w)},
	\]
	which is related to the hyperbolic distance as follows,
	\[
		\cosh d_\H(z,w) = 2u(z,w) + 1.
	\]
	
	Let $\Gamma \leq \SL(2,\R)$ is a discrete group of isometries of $\H$ such that $\Gamma \backslash \H$ has finite hyperbolic volume. Define the Petersson inner product on automorphic forms of weight $k$ as
	\[
		\langle f,g \rangle = \frac{1}{\mathcal{V}} \iint_{\Gamma\backslash \H} f(z)\overline{g(z)} \d \mu(z)
	\]
	as long as the integral converges. The factor $\mathcal{V} := \Vol(\Gamma \backslash \H)$ is sometimes not included in the literature. The inclusion or omission of this factor changes the definition of the $L^2$-norm but not the sup-norm, so the result of Theorem \ref{thm:supBound} may differ by a factor of $N^{\hf}$ according to the chosen convention.
	 	
	For any $\k \in \R$ define the corresponding Laplacian operator
	\[
		\Delta_\k := -y^2\lt(\frac{\partial^2}{\partial x^2} + \frac{\partial^2 }{\partial y^2}\rt) + i\k y \frac{\partial}{\partial x}.
	\]
	We will only consider the cases $\k \in \hf \Z$.
	
	In \cite{shimura1973modular}, Shimura has defined a weight $\k$, level $4N$ modular cuspform $\tf$ with nebentypus $\eta$, as a holomorphic function on the upper half plane which satisfies the transformation formula
	\[
		\tf(\gamma z) =\eta(d) \vep_d^{-2\k} \kron{c}{d} (c z+ d)^\k \tf(z)
	\] 
	for all $\gamma = \bsm a&b\\c&d\esm \in \Gamma_0(4N)$, and vanishes at each cusp of $\Gamma_0(4N)$. Here
	\[
		\vep_d = 
		\begin{cases} 
		1 &\text{ if } d \equiv 1 \pmod4\\ 
		i &\text{ if } d \equiv 3 \pmod 4
		\end{cases}
	\]
	and $\kron{c}{d}$ is the Kronecker symbol extended as in the notation section of Shimura's paper \cite{shimura1973modular}.
	
	Shimura has also defined the notion of a Hecke operators $T_{\k,\eta}(\ell)$ on half integer weight modular forms, again as the action of double cosets via the slash operators. With respect to the Petersson inner product the adjoint of the Hecke operators are given by $T_{\k,\eta}(\ell)^* = \overline{\eta(\ell)}T_{k,\eta}(\ell)$ and this implies that the Hecke eigenvalues satisfy $\overline{\lambda(\ell)} = \overline{\eta(\ell)} \lambda(\ell) $ when $(\ell, 4N) = 1$. These operators also commute. One novelty of the theory in the half-integer weight case is that the operators are identically zero for all non-square $\ell$.    
	
	For $\k$ a half integer and $\eta$ a Dirichlet character modulo $4N$ denote the space of cusp forms of weight $\k$, level $4N$ and nebentypus $\eta$ by $S_\k(4N,\eta)$. Since the Hecke operators commute, one may choose a basis of simultaneous Hecke eigenfunctions. Given $\tf$ a modular form of half integral weight $\k$, we call $\tF(z) = y^{\frac\k2}\tf(z)$.
	 
	For $z \in \H$ and $\gamma \in \Gamma_0(4)$ call the half integer weight cocycle as $$J(\gamma, z) = \vep_d\inv \kron{c}{d} (cz+d)^\hf / |cz+d|^\hf,$$ where $\gamma = \bsm a&b\\c&d\esm$.	The function $\tF$ defined above, satisfies the transformation formula $\tF(\gamma z) = \eta(d) J(\gamma,z) \tF(z)$.
	
	Let $$\mathfrak{G} = \{(A,\phi(z)): A \in \GL_2(\R), \phi(z)^2 = t\det(A)^{-\hf}(cz + d)/|cz+d|\}.$$ Given  $\tF: \H \to \C$, define $\tF|_\k[(A,\phi)] = \phi(z)^{-2\k} \tF(Az)$. Furthermore if $\gamma \in \Gamma_0(4)$, we call, by abuse of notation, $\tF|_\k[\gamma] (z) = J(\gamma,z)^{-2\k}\tF(\gamma z)$. We shall also sometimes use the notation $\tF|_\k[A]$ for $A\in \GL_2(\R)$ to mean $\tF|_\k[(A,\phi)]$ if the choice of $\phi$ in $(A,\phi)\in\mathfrak{G}$ does not matter.

%NEW SECTION
%NEW SECTION
%NEW SECTION
%NEW SECTION

\section{Bound for the supremum norm} \label{sec:supNormBound}

	We first prove the following qualitative bound based on the Fourier expansion of $\tf$ at the $\infty$ cusp. This will help us to prove the geometric bound $N^{\hf+\vep}$. It will also be useful later on. 
	%proposition begins
	%proposition begins
	\begin{prop}\label{prop:fourierBound}
		Let $\tf$ be a half integral modular cuspform of weight $\k$ and level $4N$. Then for all $z = x + iy \in \H$ and for all $\vep >0$,
		\[
			y^{\frac \k2}|\tf(z)| \ll_{\k,\vep} \|y^{\frac \k2} \tf\|_{L^2}\frac{N^\vep}{\sqrt{y}}
		\]
	\end{prop}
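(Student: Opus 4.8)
The plan is to exploit the Fourier expansion of $\tf$ at the cusp $\infty$ together with the standard bound on individual Fourier coefficients that follows from $L^2$-normalization. Since $\tf$ is a cusp form of weight $\k$ and level $4N$, it has a Fourier expansion of the form
\[
	\tf(z) = \sum_{n \geq 1} a(n) e(nz),
\]
with no constant term, because $\tf$ vanishes at the cusp $\infty$. Writing $\tF(z) = y^{\k/2}\tf(z)$, we then have
\[
	y^{\frac\k2}|\tf(z)| \leq \sum_{n\geq1} |a(n)| \, y^{\k/2} e^{-2\pi n y}.
\]
The core of the argument is to control each coefficient $a(n)$ in terms of $\|y^{\k/2}\tf\|_{L^2}$, and then to sum the resulting series in a way that captures the decay in $y$.

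**The first step** is to bound the Fourier coefficients. The standard approach is to integrate the Fourier expansion against $e(-nx)$ over a horizontal segment of length one at a fixed height $y_0$, giving $a(n)e^{-2\pi n y_0}$, and then to integrate the square of this over $y_0$ against the invariant measure $\d\mu$. Relating the result to $\|\tF\|_{L^2}$ via Parseval's identity over the $x$-integral yields a bound of the shape $|a(n)|^2 e^{-4\pi n y_0} \ll \|\tF\|_{L^2}^2$ after accounting for the volume factor; more precisely one extracts a bound of the form $y_0^{\k}|a(n)|^2 e^{-4\pi n y_0} \ll N^\vep \|\tF\|_{L^2}^2$ uniformly over a suitable range of $y_0$, and optimizing over $y_0$ (or choosing $y_0 \asymp 1/n$) produces a clean bound $|a(n)| \ll N^\vep n^{(\k-1)/2}\|\tF\|_{L^2}$. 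The factor $N^\vep$ absorbs the dependence on the index of $\Gamma_0(4N)$, which contributes $\ll N^{1+\vep}$ to the volume but, after normalization by $\V$ in the Petersson product as defined in the notation, only enters through lower-order factors.

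**The second step** is to substitute this coefficient bound into the series and estimate
\[
	y^{\frac\k2}|\tf(z)| \ll N^\vep \|\tF\|_{L^2} \sum_{n\geq 1} n^{(\k-1)/2}\, y^{\k/2} e^{-2\pi n y}.
\]
The sum is comparable to the integral $\int_0^\infty t^{(\k-1)/2} y^{\k/2} e^{-2\pi t y}\,\d t$, which after the substitution $s = ty$ scales like $y^{\k/2} \cdot y^{-(\k+1)/2} = y^{-1/2}$, up to a constant depending only on $\k$ through a Gamma factor. This is precisely the claimed $1/\sqrt{y}$ decay. One must be slightly careful when $y$ is large, but there the exponential decay dominates and the bound is trivially satisfied; the estimate is sharpest and most delicate for small $y$, where many terms of the series contribute.

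**The main obstacle** is tracking the dependence on $N$ correctly through the normalization conventions, so that only a factor $N^\vep$ appears rather than a power of $N$. This hinges on the normalized Petersson inner product with the volume factor $\V = \Vol(\Gamma_0(4N)\backslash\H) \asymp N^{1+\vep}$ included, as fixed in the notation section; the coefficient bound and the volume normalization must be balanced so that the $N$-dependence cancels up to $N^\vep$. A secondary technical point is justifying the interchange of summation and the Parseval step uniformly in $y_0$, but this is routine for a cusp form with rapidly decaying coefficients.
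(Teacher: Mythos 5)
Your step 2 is a correct computation, but the entire weight of the proposition rests on your step 1, and that step has a genuine gap: the individual coefficient bound $|a(n)| \ll N^\vep n^{(\k-1)/2}\|y^{\k/2}\tf\|_{L^2}$ is neither provable by the argument you sketch nor known at all. Parseval at a single height $y_0$ gives $\sum_m |a(m)|^2 e^{-4\pi m y_0} = \int_0^1 |\tf(x+iy_0)|^2\,\d x$, and this one-height integral is controlled by the \emph{sup} norm, not the $L^2$ norm --- using it here would be circular. To reach the $L^2$ norm you must integrate $y_0$ over a band such as $[1/n,2/n]$ and then pay for the multiplicity with which the strip $\{0\le x\le 1,\ y_0\le y\le 2y_0\}$ covers $\Gamma_0(4N)\backslash\H$ (the strip is a fundamental domain for $\Gamma_\infty$, not for $\Gamma_0(4N)$). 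The standard count $\#\{\gamma\in\Gamma_\infty\backslash\Gamma_0(4N):\Im(\gamma z)\ge Y\}\ll 1+1/(NY)$ turns an honest version of your argument into
\[
	|a(n)|^2 \;\ll\; n^{\k-1}\Bigl(1+\frac{n}{N}\Bigr)\,\V\,\|y^{\k/2}\tf\|_{L^2}^2,
\]
so you lose a factor $\V^{1/2}\asymp N^{\hf+o(1)}$ --- precisely the factor you assert ``cancels'' --- and more when $n>N$. There is also an internal slip: even granting your intermediate bound $y_0^\k|a(n)|^2e^{-4\pi n y_0}\ll N^\vep\|y^{\k/2}\tf\|_{L^2}^2$, optimizing at $y_0\asymp 1/n$ yields $|a(n)|\ll N^\vep n^{\k/2}\|y^{\k/2}\tf\|_{L^2}$ (Hecke's exponent), not $n^{(\k-1)/2}$; feeding $n^{\k/2}$ into your step 2 produces $1/y$ rather than $1/\sqrt{y}$, which is far too weak for the paper's later use (for $y\ge N^{-8/9}$ one needs $y^{-1/2}\le N^{4/9}$). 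Finally, the bound you actually claim, $|\tA(n)|\ll N^\vep\|y^{\k/2}\tf\|_{L^2}$ uniformly in $n$, is at least as strong as the Ramanujan--Petersson conjecture for half-integral weight forms, which is open (by Waldspurger such coefficients encode central twisted $L$-values, so this would be a Lindel\"of-quality statement).

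The paper avoids individual coefficient bounds entirely. It splits the expansion into head ($n\le N^\vep/y$) and tail, applies Cauchy--Schwarz to the head, and bounds the \emph{mean square} $\sum_{n\le X}|\tA(n)|^2\ll X N^\vep\|y^{\k/2}\tf\|_{L^2}^2$ by Rankin--Selberg: the Dirichlet series $\sum_n|\tA(n)|^2n^{-s}$ unfolds to $\langle E(\cdot,s),|\tf|^2y^\k\rangle$, whose residue at $s=1$ is proportional to $\|y^{\k/2}\tf\|_{L^2}^2$; here the volume factor from unfolding cancels against the residue $1/\V$ of the Eisenstein series, which is exactly the cancellation your approach cannot access. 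Only the \emph{average} of $|\tA(n)|^2$, not each term, obeys the clean $N^\vep$ bound, and the average is all the proposition needs. To salvage your write-up, replace step 1 by this mean-value estimate; your step 2 then becomes the (easy) bound $\bigl(\sum_n n^{\k-1}y^\k e^{-4\pi ny}\bigr)^{1/2}\ll_\k 1$ for the second Cauchy--Schwarz factor, and the $1/\sqrt{y}$ emerges from the head having length $N^\vep/y$.
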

	%proposition ends
	%proposition ends
	%
	%proof of proposition
	%proof of proposition
	\begin{proof}
		Given
		\[
			y^{\frac\k2} \tf(z) = \sum_{n=1}^\infty \tA(n) n^{\frac{\k-1}{2}}y^{\frac{\k}{2}}e^{2\pi in z},
		\]
		we split the sum into its head and tail. Put
		\[
			H = \sum_{n=1}^{N^\vep/y} \tA(n) n^{\frac{\k-1}{2}} y^{\frac \k2} e^{2\pi i nz} \qquad \text{and} \qquad T = \sum_{n\geq N^{\vep}/y} \tA(n) n^{\frac{\k-1}{2}}y^{\frac k2} e^{2\pi i nz}.
		\]
		for $\vep>0$. The tail $T$ can be bounded by the decay of the exponential. The Fourier coefficient is bounded as $\tA(n) = O(n^\hf)$ and therefore $T = O\lt(N^\vep e^{-N^{\vep}}(1 + 1/y)\rt)$, and
		\[
			H(X) \ll \lt(\sum_{n=1}^{N^\vep/y} |\tA(n)|^2 \rt)^\hf \lt(\sum_{n=1}^{N^\vep/y}  n^{\k-1} y^{\k}e^{-4\pi n y} \rt)^\hf. 
		\]
		The first  sum can be bounded by applying an inverse Mellin transform to the Dirichlet series
		\[
			\sum_{n=1}^\infty \frac{|\tA(n)|^2}{n^s} = \frac{(4\pi)^{s + \k-1}}{\Gamma(s + \k-1)} \langle E(z,s), |\tf(z)|^2y^\k \rangle
		\]
		where $E(z,s) = \sum_{\Gamma_\infty \backslash \Gamma_0(N)} \Im(\gamma z)^s$ is the weight zero Eisenstein series. The function $E(z,s)$, and hence the Dirichlet series, has a pole at $s = 1$, and the residue of the Dirichlet series equals $\frac{(4\pi)^\k}{\Gamma(\k)} \|y^{\frac{\k}{2}}\tf\|_{L^2}^2$. After taking an inverse Mellin transform, we can bound it with $O(N^\vep/y)$. The second factor can be majorized with the gamma integral, which converges. Combining, we have
		\[
			y^{\frac \k2} |\tf(z)| \ll_k \frac{N^\vep}{\sqrt{y}}\|y^{\frac \k2 }\tf\|_{L^2} + N^\vep e^{-N^\vep}\lt(1 + \frac 1y\rt)
		\]
		The second summand, coming from the tail of the sum, decays as $N\to \infty$, and rapidly so. Thus we get the result. 
	\end{proof}
	%proof of proposition ends
	%proof of proposition ends
	Notice that this proposition gives a better bound for $\tf$ near the cusp at infinity. Such behavior is expected as the terms of the Fourier expansion decay rapidly as one goes up the $\infty$ cusp. 
	
	Further note that in the above proof we only used the basic Fourier expansion of $\tf$. In what follows, we will use this theorem on $\tf|_\k[B]$ instead of $\tf$, where the matrix $B$ satisfies that $\Gamma' = \Gamma_0(4N) \cap B\inv\Gamma_0(4N)B$ is of finite index in $\Gamma_0(4N)$ and $B\inv \Gamma_0(4N)B$. Such $B$ are said to be in the commensurator of $\Gamma_0(4N)$.  We have the Fourier expansion
	\[
		\tf|_\k[B]= \sum_{n=1}^\infty \tilde{B}(n) n^{\frac{\k-1}{2}} e^{2\pi i n z}
	\] 
	and the Dirichlet series $\sum_{n=1}^\infty |\tilde{B}(n)|^2 n^{-s}$ is obtained from the inner product $\langle E(B\inv z,s,\Gamma'), y^\k|\tf(z)|^2 \rangle$ where we have taken the Eisenstein series with respect to the congruence subgroup $\Gamma'$ which has a pole at $s =1$ with constant residue. 
	
	The Atkin-Lehner involution matrices $W(Q)$ for $\Gamma_0(4N)$ for odd divisors $Q|N$, could be chosen as
	\begin{equation*}
		W(Q) =  \bpm Q^2\beta &4N/Q\\4N\gamma&Q\epm , 
	\end{equation*}
	where $ Q^2 \beta - \lt(\frac{4N}{Q}\rt)^2 \gamma = 1$. This is also an Atkin-Lehner operator for $\Gamma_0(2N)$; it is a determinant $Q$ matrix which is upper triangular modulo $2N$ and modulo $Q$ only the upper right corner is nonzero. The group $\Gamma_0(2N)$ has one extra Atkin-Lehner operator $W(2)$ which can be chosen to be of the form 
	\begin{equation*}
		W(2) = \bpm 2\alpha & \beta \\ 2N\gamma& 2\delta \epm 
	\end{equation*}
	so that $4\beta - N^2\gamma = 1$. The Atkin-Lehner operators normalize the group they belong to. In fact they form a large part of the normalizers of $\Gamma_0(2N)$, see \cite{bars2008group}.
	
	There are operators $\tilde{W}(Q)$ for odd $Q$ as defined by Ueda in \cite{ueda1993twisting} (p.151), these are slash operators of $\tW(Q) \in \mathfrak{G}$. One can choose $\operatorname{pr}(\tilde{W}(Q)) = W(Q)$, where $\operatorname{pr}: \mathfrak{G} \to \GL_2(\R)$ is projection onto the matrix component.

	Let $A_0(2N)$ be the group generated by the above $W(Q)$ (including $Q = 2$) and $\Gamma_0(2N)$. It is shown in \cite{TemplierSup2} (Lemma 2.2) that if the point $z\in \H$ is chosen so that it has the highest imaginary part among $\delta z$ as $\delta$ runs through $A_0(2N)$, then
	\begin{equation}\label{eqn:diophantine}
		\Im(z) \geq \frac{\sqrt{3}}{4N} \qquad \text{and}\qquad |cz+d|^2 \geq \frac1{2N}
	\end{equation}
	for any $(c,d) \in \Z^2 - (0,0)$. Call the set of such points $z$ as $\mathcal{F}(2N)$.
	
	%theorem begins
	%theorem begins
	\begin{theorem}\label{thm:atkinLehner}
		Let $N$ be squarefree and odd. Let $\mathcal{B}$ be the union over all characters $\eta$ of Hecke eigenbases for $S_\k(4N,\eta)$. Put $\mathcal{B'} = \B \cup \B|_\k[A] \cup \B|_\k[W(2)] \cup \B|_\k[AW(2)]$ where $A = \bpm 1&0\\2N&1\epm$. Then,
		\[
			\sup_{z\in \H} \max_{\tf \in\mathcal{B'}} y^{\frac \k2}|\tf(z)|
		\] 
		is attained at $z \in \mathcal{F}(2N)$.
	\end{theorem}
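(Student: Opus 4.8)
The plan is to prove that
\[
\Psi(z):=\max_{\tf\in\mathcal{B}'}y^{\frac\k2}|\tf(z)|
\]
is invariant under the action of $A_0(2N)$ on $\H$, and then to read off the statement from the definition of $\mathcal{F}(2N)$. Granting the invariance, fix any $z\in\H$; since $A_0(2N)$ is a Fuchsian group of the first kind its orbit through $z$ is discrete, so $\Im(\delta z)$ attains a maximum at some $\delta_0\in A_0(2N)$, and $z':=\delta_0 z$ lies in $\mathcal{F}(2N)$ by definition while $\Psi(z')=\Psi(z)$. Moreover every $\tf\in\mathcal{B}'$ is, up to a fixed slash, a cusp form, so the weighted value $y^{\frac\k2}|\tf(z)|$ is continuous and decays to $0$ toward every cusp; hence the finite maximum $\Psi$ attains its supremum somewhere on $\H$, and by invariance it is already attained on $\mathcal{F}(2N)$, which is exactly the claim.

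The reduction rests on two transformation facts. First, both the cocycle $J(\gamma,z)$ and the metaplectic factor $\phi(z)$ have modulus $1$ (the weight $y^{\frac\k2}$ exactly absorbing the $|cz+d|^{\k}$ growth), so that for $B$ in the commensurator of $\Gamma_0(4N)$, with any metaplectic lift in $\mathfrak{G}$,
\[
y^{\frac\k2}\bigl|(\tf|_\k[B])(z)\bigr|=\Im(Bz)^{\frac\k2}\,|\tf(Bz)|.
\]
Taking $B=\delta\in A_0(2N)$ gives $\Psi(\delta z)=\max_{\tf\in\mathcal{B}'}y^{\frac\k2}|(\tf|_\k[\delta])(z)|$, and since only absolute values enter, $\Psi$ is $A_0(2N)$-invariant as soon as right-slashing by every $\delta\in A_0(2N)$ permutes $\mathcal{B}'$ up to scalars of modulus $1$ — that is, as soon as $\mathcal{B}'$ is a union of orbits for the slash action. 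Second, $\Gamma_0(4N)$ acts by automorphy, $\tF(\gamma z)=\eta(d)J(\gamma,z)\tF(z)$, so each $\tf\in\mathcal{B}$ is sent to a unit multiple of itself; and by Ueda's theory the odd Atkin--Lehner operators $\tW(Q)$, $Q\mid N$, carry Hecke eigenforms to Hecke eigenforms, mapping $S_\k(4N,\eta)$ to $S_\k(4N,\eta')$ for a suitable $\eta'$. Since $\mathcal{B}$ is the union over all nebentypus characters of eigenbases, the subgroup $\mathcal{N}:=\langle\Gamma_0(4N),\ \tW(Q):Q\mid N\text{ odd}\rangle$ stabilizes $\mathcal{B}$ up to unit scalars.

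The heart of the matter is the right-coset decomposition
\[
A_0(2N)=\mathcal{N}\sqcup\mathcal{N}A\sqcup\mathcal{N}W(2)\sqcup\mathcal{N}AW(2),\qquad A=\bsm1&0\\2N&1\esm.
\]
Because $N$ is odd, $4N$ and $2N$ have the same prime support, whence $[\Gamma_0(2N):\Gamma_0(4N)]=2$ with $\Gamma_0(2N)=\Gamma_0(4N)\sqcup\Gamma_0(4N)A$ (note $A^2=\bsm1&0\\4N&1\esm\in\Gamma_0(4N)$), while $A_0(2N)/\Gamma_0(2N)\cong(\Z/2)^{\omega(2N)}$ is generated by the Atkin--Lehner classes, of which $\mathcal{N}$ accounts for exactly the $\omega(N)$ odd ones. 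Comparing orders gives $[A_0(2N):\mathcal{N}]=4$, and the four representatives lie in distinct cosets since $A$ points in the $\Gamma_0(2N)/\Gamma_0(4N)$ direction and $W(2)$ in the even Atkin--Lehner direction, both transverse to $\mathcal{N}$. Therefore $\mathcal{B}|_\k[A_0(2N)]=\bigcup_c\mathcal{B}|_\k[\mathcal{N}c]=\bigcup_c\mathcal{B}|_\k[c]=\mathcal{B}'$ up to unit scalars, so $\mathcal{B}'$ is slash-invariant and $\Psi$ is $A_0(2N)$-invariant.

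I expect the crux to be the algebra at the prime $2$. Since half-integral weight forms carry no Atkin--Lehner involution at $2$, and since $\Gamma_0(4N)$ sits with index $2$ inside $\Gamma_0(2N)$ rather than coinciding with it, neither $W(2)$ nor the coset representative $A$ can be absorbed into $\mathcal{N}$; this is precisely why $\mathcal{B}$ must be enlarged to $\mathcal{B}'$, and one has to check with care that these two extra directions close up, i.e.\ that $A$ and $W(2)$ commute modulo $\mathcal{N}$ and square into it, so that $A_0(2N)/\mathcal{N}$ really is the Klein four-group with representatives $I,A,W(2),AW(2)$. The remaining bookkeeping — which twisted character $\eta'$ appears under $\tW(Q)$, and the modulus-$1$ scalar ambiguities in the metaplectic composition law — is routine, since only $|\tf|$ enters $\Psi$ and all such scalars are harmless.
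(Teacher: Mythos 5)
Your proof is correct and takes essentially the same route as the paper's: both reduce the theorem to showing that right-slashing by $A_0(2N)$ permutes $\mathcal{B}'$ up to unimodular scalars, using Ueda/Kohnen for the odd operators $\tilde{W}(Q)$, automorphy for $\Gamma_0(4N)$, the index-two coset representative $A$, and the factorization of every $\delta \in A_0(2N)$ into the four pieces $\mathcal{N},\ \mathcal{N}A,\ \mathcal{N}W(2),\ \mathcal{N}AW(2)$. The only cosmetic difference is that the paper obtains this factorization directly, writing $\delta = \gamma W(Q)A^iW(2)^j$ with $\gamma \in \Gamma_0(4N)$ via the normalizing properties, which requires only the union of the four cosets, whereas your index-count argument additionally leans on their disjointness (which you justify only loosely, though it does follow, e.g., from determinant classes modulo squares); since only the union is actually needed, this is harmless.
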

	%theorem ends
	%theorem ends

	%proof of theorem begins
	%proof of theorem begins
	\begin{proof}
		Let us use the notation of $\tF(z) = y^{\frac \k2}\tf(z)$. One sees from Ueda's and Kohnen's work that the operators $\tilde{W}(Q)$ send Hecke eigenforms in $S_\k(4N,\eta)$ to Hecke eigenforms in $S_\k(4N,\eta\kron{Q}{\cdot})$, see \cite{ueda1993twisting} Proposition 1.20 and \cite{kohnen1982newforms}. In fact $\tF$ and $\tF|_\k[\tW(Q)]$ have the same Hecke eigenvalues for Hecke operators $T_{\k,\eta}(\ell^2)$ and $T_{\k,\eta\kron{Q}{\cdot}}(\ell^2)$ respectively, with $(\ell^2, 4N)=1$.
		
		Now given $\tf \in \B'$, assume that $|\tF(z)|$ attains its maximum at a $w \in \H$. We can apply an element $\delta \in A_0(2N)$ so that $w = \delta z \in \mathcal{F}(2N)$. We may express the element $\delta$ as $W(Q) \gamma' W(2)^j$ where $Q$ is odd, $j \in \{0,1\}$ and $\gamma' \in \Gamma_0(2N)$. This is simply because the Atkin-Lehner operators normalize $\Gamma_0(2N)$. We may further decompose $\gamma' = \gamma A^i$ where $\gamma \in \Gamma_0(4N)$ and $A$ is an element chosen above so that it is in $\Gamma_0(2N)$ but not $\Gamma_0(4N)$, and $i \in \{0,1\}$. Since the latter is an index two subgroup in the former, $\Gamma_0(2N)$, and hence $A$, normalizes $\Gamma_0(4N)$. Finally we have $\delta = \gamma W(Q) A^i W(2)^j$.
 		\[
			|\tF(w)| = |\tF(\gamma W(Q)A^iW(2)^j z)|   = |\tF|_\k[\gamma W(Q)A^iW(2)^j](z)| = |\tF'(z)|
		\]
		where $\tF'(z) = y^{\frac \k2}\tf'(z)$ and $\tf'$ is another element of $\B'$.
	\end{proof}
	%proof of theorem ends
	%proof of theorem ends

	From this theorem, we see that given any $\tf$ it is enough to bound the function at points $z \in \mathcal{F}(2N)$, as long as the bound we have will be independent of a particular choice of $\tf$, but only depend on the level and weight. Hence Proposition \ref{prop:fourierBound} yields the following theorem.
	
	%theorem begins
	%theorem begins
	\begin{theorem}\label{thm:trivialBound}
		Let $N$ be squarefree and odd, and let $\tf$ be an element of $\B'$, as defined above. We have the upper bound
		\[
			y^{\frac \k2}|\tf(z)| \ll N^{\hf + \vep}\|y^{\frac \k2}\tf\|_{L^2}.
		\]
	\end{theorem}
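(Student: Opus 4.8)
The plan is to combine Theorem~\ref{thm:atkinLehner} with Proposition~\ref{prop:fourierBound}, using the Diophantine lower bound \eqref{eqn:diophantine} on the imaginary part. First I would observe that Theorem~\ref{thm:atkinLehner} tells us that for $\tf \in \B'$, the supremum of $y^{\frac\k2}|\tf(z)|$ over all of $\H$ is attained at some point $z \in \mathcal{F}(2N)$. Therefore it suffices to bound $y^{\frac\k2}|\tf(z)|$ for $z \in \mathcal{F}(2N)$, since the maximum over $\H$ equals the maximum over this restricted set. The key point is that restricting to $\mathcal{F}(2N)$ forces $\Im(z)$ to be bounded below, which is exactly what makes Proposition~\ref{prop:fourierBound} give a nontrivial bound.

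Next I would invoke Proposition~\ref{prop:fourierBound}, which gives
\[
	y^{\frac\k2}|\tf(z)| \ll_{\k,\vep} \|y^{\frac\k2}\tf\|_{L^2}\frac{N^\vep}{\sqrt y}
\]
for every $z = x+iy \in \H$. Since $z \in \mathcal{F}(2N)$, the Diophantine estimate \eqref{eqn:diophantine} yields $y = \Im(z) \geq \frac{\sqrt3}{4N}$, and hence $\frac{1}{\sqrt y} \leq \bigl(\frac{4N}{\sqrt3}\bigr)^{1/2} \ll N^{\hf}$. Substituting this into the bound from the proposition gives
\[
	y^{\frac\k2}|\tf(z)| \ll_{\k,\vep} \|y^{\frac\k2}\tf\|_{L^2}\, N^{\hf+\vep}
\]
for $z \in \mathcal{F}(2N)$, and by the first step this bound holds for all $z \in \H$.

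One subtlety I would want to check is that Proposition~\ref{prop:fourierBound} genuinely applies to every element of $\B'$, not just to the original eigenforms $\tf \in \B$. The elements of the form $\tf|_\k[A]$, $\tf|_\k[W(2)]$, and $\tf|_\k[AW(2)]$ are obtained by slashing with matrices in the commensurator of $\Gamma_0(4N)$, and the remark following the proof of Proposition~\ref{prop:fourierBound} already explains that the Fourier-coefficient bound goes through for such $\tf|_\k[B]$ via the Eisenstein series $E(B\inv z, s, \Gamma')$ attached to the finite-index subgroup $\Gamma'$, which still has a simple pole at $s=1$ with controlled residue. I expect this verification — that the implied constant and the $L^2$-normalization are uniform across all of $\B'$ and depend only on $\k$, $N$, and $\vep$ — to be the only real point requiring care; the rest is a direct substitution of the lower bound on $y$ into the proposition.
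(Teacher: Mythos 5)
Your proposal is correct and follows essentially the same route as the paper: reduce to $z \in \mathcal{F}(2N)$ via Theorem~\ref{thm:atkinLehner} (using that $\B'$ is closed under the relevant slash operators, justified by the remark after Proposition~\ref{prop:fourierBound}), then apply Proposition~\ref{prop:fourierBound} together with the lower bound $y \gg 1/N$ from \eqref{eqn:diophantine}. Your explicit attention to why the proposition applies to all of $\B'$ matches the paper's appeal to that same remark.
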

	%theorem ends
	%theorem ends
	
	%proof of theorem begins
	%proof of theorem begins
	\begin{proof}
		By proposition \ref{prop:fourierBound} we have this bound for $y \gg 1/N$, for any $\tf \in \B'$. According to the remark after the Proposition, we can apply the theorem to $y^{\frac \k2}\tf|_{\k}[A^iW(2)^j](z)$ for $i,j \in \{0,1\}$. Then by equation \eqref{eqn:diophantine} one has \[y^{\frac \k2} |\tf(z)| \ll N^{\hf + \vep} \|y^{\frac \k2}\tf\|_{L^2}.\qedhere\]
	\end{proof}
	%proof of theorem ends
	%proof of theorem ends
		
	The idea of the proof of Theorem \ref{thm:supBound} is modeled on the proof in \cite{TemplierSup2}. We first consider $y^{\frac \k 2} \tf$ within a family of half integer weight Maass forms (eigenfunctions of the weight $\k$ Laplacian). A natural way of accomplishing this is to take an automorphic kernel $K(z,w)$ which is of weight $\k$ with respect to $z$ and of weight $-\k$ with respect to $w$, and expand it spectrally. With the specialization $w = z$ the value $y^{\k}|\tf(z)|^2$ will show up as a particular summand. The next goal is to amplify this summand so that bounding the whole sum would give us nontrivial results in bounding $y^{\frac{\k}{2}}|\tf(z)|$.
	
	%NEW SUBSECTION
	%NEW SUBSECTION
	\subsection{The Automorphic Kernel}\label{subsec:automorphicKernel}
	
		An automorphic kernel $K(z,w)$ on the surface $\Gamma_0(4N) \backslash \H$ with weight $\k$ a half-integer has been constructed in \cite{PattersonLaplacian1} as follows:
		
		There, Patterson extends the theory of automorphic kernels and point pair invariants of Selberg to the case of arbitrary real weight. Start with a positive even function $h(t)$ which can be extended to an analytic function in horizontal strips with sufficient decay. The exact conditions are given in p.91 of \cite{PattersonLaplacian1}. Using the Selberg transform one obtains a point pair invariant $k(z,w)$ in the upper half plane; that is, $k$ is a function of hyperbolic distance in the upper half plane. As a function of the distance between $z$ and $w$ equation (15) loc.\ cit.\ shows that $k(z,w)\ll u(z,w)^{-1 - \vep}$ (recall that $u(z,w)$ was defined in Section \ref{sec:notation}). The automorphic kernel is formed as the sum
		\[
			K(z,w) = \sum_{\gamma \in \Gamma_0(4N)} \eta(\gamma) J(\gamma,w)^{2\k} (( z, \gamma w))^\k k(z, \gamma w),
		\]
		where we have used the notation of loc.\ cit.\ to mean
		\[
			((z,w))^\k = \frac{(w-\bar{z})^{2\k}}{|w-\bar{z}|^{2\k}}.
		\]
		%it might look like I need to multiply by a $i^{2k}$ but the reality is that there is another factor like that in equation 13 of \cite{PattersonLaplacian1} that evens things out.
		
		Furthermore, the quantity $((z,w))$ satisfies
		\[
			((\gamma z, \gamma w))^{\k} = J(\gamma,z)^{2\k}J(\gamma,w)^{-2\k} ((z,w))^{\k},
		\]
		where the $J$ function is the normalized half-integer weight cocyle defined in Section \ref{sec:notation}.
		
		The resulting function $K$ is automorphic in both variables, with weights $\k$ and $-\k$ and characters $\eta$ and $\bar{\eta}$ respectively:
		\begin{align*}
			K(\gamma z, w) &= \eta(\gamma)J(\gamma, z)^{2\k} K(z,w),\\
			K(z,\gamma w) &= \overline{\eta(\gamma)}J(\gamma, w)^{-2\k} K(z,w).
		\end{align*}
		
		We call $K(z,w)$ the automorphic kernel on the surface $\Gamma_0(4N) \backslash \H$, and as can be seen in \cite{PattersonLaplacian1} it satisfies the property that if $\tF$ is an eigenfunction of $\Delta_\k$ and automorphic of weight $\k$, then
		\[
			\langle K(z,*) , \overline{\tF}\rangle =\frac{1}{\mathcal{V}} \iint_{\Gamma\backslash \H} K(z,w) \tF(z)\d\mu(w) =\frac{1}{\mathcal{V}} h(t) \tF(z),
		\]
		where $t$ is the spectral parameter given by $\Delta_k \tF = (\qtr + t^2)\tF$. There is an inverse for the Selberg transform which allows one to obtain $h(t)$ from $k(z,w)$ given by
		\[
			h(t) \Im(z)^{\hf + it} = \iint_{\H} k(z,w) \Im(w)^{\hf + it} \d \mu(w).
		\]
		
		Thus, spectral expansion of $K(z,w)$ in $L^2(\Gamma\backslash\H, \k,\eta)\otimes L^2(\Gamma\backslash\H,-\k,\overline{\eta})$ is,
		\begin{align*}\label{eqn:spectralExpansion}
			K(z,w) &=\frac{1}{\V} \sum_{j} h(t_j) \tF_j(z) \overline{\tF_j(w)} \\
			&\aquad\qquad+\underbrace{\sum_{\mathfrak{a} \text{ cusp}  } \frac{1}{2\pi} \int\limits_0^\infty h(t) E_{\mathfrak{a},\k,\eta}^{(4N)} (z, \tfrac12+ it) \overline{ E_{\mathfrak{a},\k,\eta}^{(4N)} (w, \tfrac12+ it) } \d t}_\text{cts.}.
		\end{align*}
		Here $\tF_j$ are the weight $\k$ Maass forms and $E_{\mathfrak{a},\k,\eta}^{(4N)}(z,s)$ is the weight $\k$ Eisenstein series at the cusp $\a$. We have expanded the function $K(z,w)$ in a spectral basis and since the Laplacian commutes with Hecke operators we may further diagonalize so that $\tF_j$ are eigenvalues of all the Hecke operators $T_{\k,\eta}(\ell)$. It is stated in \cite{hoffstein1993eisenstein} that metaplectic Eisenstein series are Hecke eigenfunctions. The proof is similar to the integral weight case and is due to the fact that $y^s$ is an eigenfunction of the Hecke operators. 
		 
		From now on we will suppress the continuous part of the spectrum in the notation. Furthermore note that the functions of interest for us, half integral weight holomorphic cuspidal forms, show up in the initial sum, multiplied with a factor of $y^{\frac \k2}$. Notice that all the $\tF_j$'s are normalized to have $L^2$-norm equal to one. We will assume that $y^{\frac{\k}{2}} \tf$ under investigation also has been normalized.

	%NEW SUBSECTION
	%NEW SUBSECTION
	\subsection{Amplification} \label{subsec:amplification}
		
		Apply the $\ell$\th\ Hecke operator in the $z$ variable to both sides.
		\begin{equation}\label{eqn:heckeSpectral}
			T_{\k,\eta}(\ell) K(z,w)  = \frac{1}{\V}\sum_j \widetilde{\lambda}_j(\ell)h(t_j) \tF_j(z) \overline{\tF_j(w)} + \text{ cts.} 
		\end{equation}
		Actually only the square $\ell$'s make a contribution because in the half integer weight case, the $\ell$\th\ Hecke operator is 0 unless $\ell$ is a square. We normalize the Hecke eigenvalues $\widetilde{\lambda}_j(\ell) =: \ell^{\frac{\k-1}{2}}\ttau_j(\ell)$. Multiply \eqref{eqn:heckeSpectral} by some constants $y_\ell/\ell^{\frac{\k-1}{2}}$ and sum over $\ell$:
		\begin{equation}\label{eqn:sumHeckeOperatorsOnKernel}
			R:= \sum_{\ell =1}^\infty\frac{ y_\ell}{\ell^{\frac{\k-1}{2}}}  T_{\k,\eta}(\ell) K(z,w) = \frac{1}{\V}\sum_\ell y_\ell \lt(\sum_{j} \ttau_j(\ell) \tF_j(z) \overline{\tF_j(w)} + \text{ cts.}\rt).
		\end{equation}
		
		The left hand side can be evaluated explicitly, first off start with the double coset decomposition,
		\[
			\Gamma_0(4N) \lt( \begin{matrix} 1&0\\0&\ell\end{matrix} \rt)\Gamma_0(4N) = \bigcup_{\nu}\Gamma_0(4N) \xi_\nu.
		\]
		Now write
		\begin{align*}
			T_{\k,\eta}(\ell) K(z,w) &= \frac 1\ell \sum_{\nu} \lt.K(z,w)\rt|_\k [\xi_\nu] \\
			&= \frac{1}{\ell} \sum_{\nu} \ell^{\frac \k2} K(\xi_\nu z,w) J(\xi_\nu,z)^{-2\k} \overline{\eta(\xi_\nu)}\displaybreak[0]\\
			&= \ell^{\frac \k2 -1}\sum_{\nu}\overline{\eta(\xi_\nu)} J(\xi_\nu ,z)^{-2\k} \\
			&\aquad \times\sum_{\gamma \in \Gamma_0(4N)} \overline{\eta(\gamma)}k(\gamma \xi_\nu z,w) J(\gamma, \xi_\nu z)^{-2\k} ((\gamma \xi_\nu z,w))^\k \displaybreak[0] \\
			&= \ell^{\frac \k2-1} \sum_{\gamma \in \Gamma_0(4N) \lt(\begin{smallmatrix} 1&0\\ 0&\ell \end{smallmatrix}\rt)\Gamma_0(4N)} \overline{\eta(\gamma)}k(\gamma z, w) J(\gamma z, w)^{-2\k} ((\gamma z, w))^\k.
		\end{align*}
		
		Taking absolute values we obtain
		\begin{equation} \label{eq:spectralSumLessThan}
			R \leq \sum_{\ell=1}^\infty \frac{|y_\ell|}{\sqrt{\ell}} \sum_{\gamma \in M(\ell, 4N)} |k(\gamma z, w)|,
		\end{equation}
		where $M(\ell,4N)$ is the set of integral matrices with determinant $\ell$ and where the lower left entry is divisible by $4N$. After taking $w=z$ the quantity $y^{\k}|\tf(z)|^2$ will appear on the left hand side of this inequality alongside with other eigenfunctions of the laplacian $\Delta_\k$, amplified if we make a correct choice of $y_\ell$'s.
		
		Let us briefly mention how we will bound the functions $\tf(A^i W(2)^j z)$ for $i,j \in \{0,1\}$. Call $B = A^i W(2)^j$. First note that both $A$ and $W(2)$ normalize the group $\Gamma_0(4N)$ and also the double cosets consisting of determinant $\ell$ matrices with lower left entry divisible by $4N$. Consider the kernel,
		\begin{equation} \label{eq:slashedAutomorphicKernel}
			H(z,w) = K(z,w)|_{\k,z}[B]|_{\k,w}[B],
		\end{equation}
		i.e.\ we have applied the slash operator with respect to both $z$ and $w$. It does not matter which lift of $B$ we take in $\mathfrak{G}$. Instead of applying the Hecke operator in the $z$ variable, which is the sum of slashing by some coset representatives $\xi_\nu$, we use $B\inv \xi_\nu B$. Call this operator by $T_{\k,\eta}^B(\ell)$. 
		\[
			T_{\k,\eta}^B(\ell) H(z,w)  = \frac{1}{\V} \sum_j \tilde{\lambda}_j(\ell) \tF_j|_\k[B](z)\overline{\tF_j|_\k[B](w)} + \text{cts.}
		\]
		Notice that the Hecke eigenvalue are the same since
		\[
			T_{\k,\eta}^B (\ell) (\tF_j|_\k[B]) = (T_{\k,\eta}(\ell) \tF_j)|_\k[B].
		\]
		
		Define
		\[
			R^B:= \sum_{\ell =1}^\infty\frac{ y_\ell}{\ell^{\frac{\k-1}{2}}}  T_{\k,\eta}^B(\ell) H(z,w). 
		\]
		Then as before
		\begin{align*}
			T_{\k,\eta}^B H(z,w) &= \frac1\ell \sum_\nu H(z,w)|_\k[B\inv \xi_\nu B]\\
			& \leq \ell^{\frac \k2 -1} \sum_{\gamma \in \Gamma_0(4N)\bsm 1&0\\0&\ell \esm\Gamma_0(4N)} |k(\gamma Bz,Bw)|.
		\end{align*}
		We can majorize this last sum if we run $\gamma$ through a larger sum, as all the terms are positive. The larger sum is the double coset $\Gamma_0(2N) \bsm 1&\\&\ell\esm \Gamma_0(2N)$. Our choice of matrix $B$ normalizes this double coset, and therefore as before we get the inequality
		\begin{equation}\label{eqn:twistedSpectralSumLessThan}
			R^B \leq \sum_{\ell=1}^\infty \frac{y_\ell}{\sqrt{\ell}}\sum_{\gamma \in M(\ell,2N)}|k(\gamma z, w)|.
		\end{equation}
		The difference between $4N$ and $2N$ does not matter since we are only looking at the size of $M(\ell, N)$ asymptotically as $N \to \infty$.
		
		 We have shown that both $\tF$ and $\tF|_\k[B]$ can be bounded by the same quantity. After this excursion, we go back to the consideration of using this inequality to bound the value of $\tF(z)$.
		
		The spectral expression of $R$ (or $R^B$) as in \eqref{eqn:sumHeckeOperatorsOnKernel} can be considered as a sum over the discrete spectrum amplified at a given $\tF$, after some choice of $y_\ell$'s. Choose some complex numbers $x_\ell$ as follows. Let $\Lambda$ be a large real quantity, and let
		\[
			\mathcal{P}^2 := \{p^2 \text{ prime} : p \nmid N \text{ and } \Lambda \leq p\leq 2\Lambda\} \text{ and } \mathcal{P}^4 := \{ p^4 : p^2 \in \mathcal{P}^2\}.
		\]
		Complex numbers $x_\ell$ will be supported on such a set. Now for our fixed half integral weight cusp form $\tf$ we define the amplifier coefficients.
		\[
			x_\ell = \begin{cases} \operatorname{sgn}(\ttau_{\tf}(\ell)) &\text{ if } \ell \in \mathcal{P}^2 \cup \mathcal{P}^4 \\ 0 & \text{ otherwise}.\end{cases}
		\]
		Here by $\operatorname{sgn}(\ttau_{\tf}(\ell))$ we mean any complex value in $S^1$. We remark  that $\sqrt{\overline{\eta(\ell)}}\ttau_{\tf}(\ell)$ is real, meaning that the phase is only determined by $\eta(\ell)$. The idea is that $\overline{x_\ell} \ttau_{\tf}(\ell)$ is positive real for all $\ell$. For $\tF_j$ other than that coming from $\tf$, we expect that there will be considerable cancellation due to phase differences in the $\ell$ sum below.
	
		Consider the sum
		\begin{equation*}
			S\mspace{-4mu}:= \mspace{-4mu} \sum_{j} h(t_j) \lt| \sum_{\ell=1}^\infty \overline{x_\ell} \ttau_j(\ell)\rt|^2 \mspace{-7mu} |\tF_j(z)|^2 \mspace{-4mu} = \mspace{-7mu} \sum_{\ell_1,\ell_2=1} \mspace{-7mu} x_{\ell_1} \overline{x_{\ell_2}} \sum_j h(t_j) \overline{\ttau_j(\ell_1)} \ttau_j(\ell_2) |\tF_j(z)|^2 \mspace{-5mu}.
		\end{equation*}
		All the summands of $S$ are positive and therefore focusing on the case $\tF_j = y^{\frac k2}\tf$ we obtain,
		\[
			h(t_{\tf}) \lt| \sum_{\ell \in \mathcal{P}^2 \cup \mathcal{P}^4} |\ttau_{\tf}(\ell)|\rt|^2 y^\k |\tf(z)|^2 \leq S.
		\]
		
		The half integral weight Hecke relations are given in \cite{purkait2012hecke}. We translate them here as follows:
		\[
			\overline{\ttau_j(\ell_1)}\tau_j(\ell_2) 
			= 
			\begin{cases}
				\overline{\eta(\ell_1)} \ttau_j(\ell_1\ell_2) &\text{ if } (\ell_1,\ell_2) = 1\\
				\overline{\eta(\ell)} \ttau_j(\ell^2)  + 1 &\text{ if } \ell := \ell_1 = \ell_2 \in \mathcal{P}^2\\
				\overline{\eta(\ell)} \ttau_j(\ell^3) + \ttau_j(\ell) &\text{ if } \ell = \ell_1\in \mathcal{P} \text{ and } \ell_2 = \ell_1^2\\
				\overline{\eta(\ell^2)} \ttau_j(\ell^3) + \overline{\eta(\ell)}\ttau_j(\ell) &\text{ if } \ell = \ell_2  \in \mathcal{P}^2 \text{ and } \ell_1 = \ell_2^2\\
				\overline{\eta(\ell^2)} \ttau_j(\ell^4) + \overline{\eta(\ell)}\ttau_j(\ell^2) + 1 &\text{ if } \ell^2 = \ell_1 = \ell_2 \in \mathcal{P}^4. 
			\end{cases}
		\]
		Therefore, the products $\overline{\ttau_j(\ell_1)} \ttau_j(\ell_2)$ can be written as sums of $\ttau_j(\ell)$'s. Let us make this change and collect all the terms with the same index. Then we call $y_\ell$ to be the coefficient of $\ttau_j(\ell)$, which is the same for all $j$. Including the continuous part of the spectrum and using \eqref{eq:spectralSumLessThan}, with $w$ specialized to the point $z$, obtain the following:
		\begin{align}
			h(t_{\tf}) \lt| \sum_{\ell \in \mathcal{P}^2 \cup \mathcal{P}^4} |\ttau_{\tf}(\ell)|\rt|^2 y^\k|\tf(z)|^2 
			&\leq \sum_j h(t_{j}) \lt| \sum_{\ell \in \mathcal{P}^2 \cup \mathcal{P}^4} \overline{x_\ell} \ttau_{j}(\ell)\rt|^2 |\tf_j(z)|^2 + \text{cts.}\notag\\
			&= \sum_{\ell} y_\ell \sum_j h(t_j) \ttau_j(\ell) |\tf_j(z)|^2  + \text{ cts.}\notag\\
			&\leq \V \sum_{\ell=1}^\infty \frac{|y_\ell|}{\sqrt{\ell}} \sum_{\gamma \in M(\ell, N)} |k(\gamma z, z)|. \label{eq:sumOverMatrices}
		\end{align}

		Just as in  \cite{TemplierSup2} we have the formula $\ttau_{\tf}(p^2)^2 - \ttau_{\tf}(p^4) = \eta(\ell^2)$ which follows from Theorem 1 of \cite{purkait2012hecke}. This forces $\max\{|\ttau_{\tf}(\ell^2)|, |\ttau_{\tf}(\ell^4)|\}\geq \hf$ and therefore gives us 
		\begin{equation} \label{eq:amplifierLength}
			\lt| \sum_{\ell \in \mathcal{P}^2 \cup \mathcal{P}^4}|\ttau_{\tf}(\ell)| \rt| \gg \Lambda^{1-\vep}.
		\end{equation}
		The occurance of $\vep$ could be replaced by a $\log \Lambda$ in the denominator as it stems from the number of primes in the interval $[\Lambda, 2\Lambda]$.
				
		We summarize the discussion above in a proposition.
		%proposition begins
		%proposition begins
		\begin{prop}\label{prop:boundByMatrices}
			Let $\tilde{g}$ be a half integral weight cusp form of weight $\k$ and level $4N$. Now let $\tf = \tilde{g}$ or $\tf= \tilde{g}|_\k[B]$ for $B$ as above. Then with numbers $y_\ell$ chosen as above 
			\[
				y^\k |\tf(z)|^2 \ll \frac{\V}{\Lambda^{2-\vep}} \sum_{\ell=1}^\infty \frac{|y_\ell|}{\sqrt{\ell}}\sum_{\gamma \in M(\ell,N)} |k(\gamma z,z)|.
			\]
		\end{prop}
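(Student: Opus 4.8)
The plan is to collect the inequalities already established in this subsection and divide out the amplification factor, treating the untwisted form $\tf = \tilde{g}$ and the slashed form $\tf = \tilde{g}|_\k[B]$ in parallel.

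For $\tf = \tilde{g}$ I would start from \eqref{eq:sumOverMatrices}, in which $y^\k|\tf(z)|^2$ already appears on the left carrying the factor $h(t_{\tf})\bigl|\sum_{\ell\in\mathcal{P}^2\cup\mathcal{P}^4}|\ttau_{\tf}(\ell)|\bigr|^2$. Dividing through by this factor and inserting the amplifier lower bound \eqref{eq:amplifierLength}, which gives $\bigl|\sum_{\ell}|\ttau_{\tf}(\ell)|\bigr|^2 \gg \Lambda^{2-\vep}$, replaces that coefficient by $h(t_{\tf})\Lambda^{2-\vep}$. It then remains only to note that $h(t_{\tf}) \gg_\k 1$: since $\tilde{g}$ is holomorphic of weight $\k$, the associated Maass form $\tF = y^{\k/2}\tf$ sits at the fixed eigenvalue $\tfrac\k2(1-\tfrac\k2)$ of $\Delta_\k$, so its spectral parameter $t_{\tf}$ is determined by $\k$ alone and lies in the strip on which $h$ is positive; hence $h(t_{\tf})$ is a positive constant depending only on $\k$, which we absorb into the implied constant to obtain the stated inequality with the matrix set $M(\ell,N)$.

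For $\tf = \tilde{g}|_\k[B]$ I would rerun the identical computation with the slashed kernel $H(z,w)$ of \eqref{eq:slashedAutomorphicKernel} and the operators $T_{\k,\eta}^B(\ell)$ in place of $K$ and $T_{\k,\eta}(\ell)$. Specializing $w = z$ in the spectral expansion of $R^B$ now isolates $y^\k|\tilde{g}|_\k[B](z)|^2 = y^\k|\tf(z)|^2$, and because $T_{\k,\eta}^B(\ell)$ preserves the Hecke eigenvalues $\widetilde{\lambda}_j(\ell)$, the very same amplifier coefficients $x_\ell$ and the induced $y_\ell$ stay valid, so \eqref{eq:amplifierLength} applies unchanged with the same $\tf$. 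On the geometric side \eqref{eqn:twistedSpectralSumLessThan} bounds $R^B$ by a sum over $M(\ell,2N)$; since we only track the size of these matrix sets asymptotically in $N$, the sets $M(\ell,2N)$ and $M(\ell,N)$ may be identified, producing precisely the same right-hand side as in the untwisted case.

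The only point requiring explicit care is the lower bound $h(t_{\tf})\gg_\k 1$ together with the verification that the amplifier built for $\tilde{g}$ is still an amplifier for $\tilde{g}|_\k[B]$. Neither is a genuine obstacle: the spectral parameter of a holomorphic form is pinned by its weight, so the first merely routes the $\k$-dependence into the implied constant, while the second is immediate from the identity $T_{\k,\eta}^B(\ell)(\tF_j|_\k[B]) = (T_{\k,\eta}(\ell)\tF_j)|_\k[B]$ recorded above, which leaves each $\ttau_{\tf}(\ell)$, and hence the choice of $x_\ell$, untouched under slashing by $B$.
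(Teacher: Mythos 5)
Your proposal is correct and follows essentially the same route as the paper: the proposition there is explicitly a summary of the preceding discussion, namely combining the amplified spectral inequality \eqref{eq:sumOverMatrices} (and its slashed analogue via \eqref{eq:slashedAutomorphicKernel} and \eqref{eqn:twistedSpectralSumLessThan}) with the amplifier lower bound \eqref{eq:amplifierLength} and dividing through. Your only addition is making explicit the lower bound $h(t_{\tf}) \gg_\k 1$ for the fixed spectral parameter of the holomorphic form, a point the paper leaves implicit but which you justify correctly.
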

		%proposition ends
		%proposition ends
		
		Now we bound the right hand side of this equation. Note that $y_\ell = 0$ unless $\ell$ is a square. In fact we have that $y_1 \ll \Lambda$, $|y_\ell| \leq 2$ in case $\ell = \ell_1^2 \ell_2^2, \ell_1^4\ell_2^4, \ell_1^2\ell_2^4, \ell_1^2\ell_1^6,\ell_1^4 \ell_1^8$ where $\ell_1$ and $\ell_2$ are distinct primes in the interval $[\Lambda, 2\Lambda]$. It is $0$ otherwise.
		
		From here onwards we cite the theorems in \cite{TemplierSup3} to bound the right hand side of \eqref{eq:sumOverMatrices}.

	%NEW SUBSECTION
	%NEW SUBSECTION
	\subsection{Counting Matrices}\label{subsec:countingMatrices}
		Now let us count the matrices in the right hand side of \eqref{eq:sumOverMatrices}. For that purpose we cite Lemma 4.2 from \cite{TemplierSup2}. 
		%lemma begins
		%lemma begins
		\begin{lemma}[Harcos-Templier]\label{lem:harcosTemplier}
			Let $K = 1 + LNy^2$. Call the number of integral matrices $\gamma = \bsm a&b\\c&d\esm$ such that $\det (\gamma)\leq L$ is a square, $c \equiv 0 \mod N$ and $u(\gamma z, z)\leq N^\vep$ by $M(z,\ell,N)$. Assuming $z \in \mathcal{F}(2N)$, this quantity is uniformly bounded by 
			\[
				\ll K L^\hf N^\vep.
			\]
		\end{lemma}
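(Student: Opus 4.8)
I would treat this as a hyperbolic lattice-point count, in the spirit of the arguments of Iwaniec and Sarnak, isolating the new feature (the determinant is a perfect square) at the decisive step. Write $\delta = N^\vep$, $\ell=\det\gamma$, $m=a-d$ and $t=a+d$. A direct computation gives $\gamma z - z = (-cz^2+mz+b)/(cz+d)$, and since $\Im\gamma z = \ell y/|cz+d|^2$, the defining condition $u(\gamma z,z)\le\delta$ becomes
\[
  |-cz^2+mz+b|^2 \le 4\delta\,\ell\, y^2 \le 4\delta L y^2 .
\]
Applied to imaginary parts alone, the same condition forces $\Im\gamma z\gg y/\delta$, whence $|cz+d|^2=\ell y/\Im\gamma z\ll\delta L$; in particular $c^2y^2\le|cz+d|^2\ll\delta L$, so the admissible $c$ (with $N\mid c$) number $\ll \sqrt{\delta L}/(Ny)+1$. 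The plan is to count, for each such $c$, the pairs $(m,b)$, and then the residual freedom in $(a,d)$.

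For fixed $c$ the displayed inequality says that $mz+b$ lies within distance $r:=2\sqrt{\delta L}\,y$ of the point $cz^2$. As $m,b$ range over $\Z$, the numbers $mz+b$ fill out the lattice $\Z z+\Z$, of covolume $y$; crucially, $z\in\mathcal{F}(2N)$ gives via \eqref{eqn:diophantine} the lower bound $\lambda_1\ge(2N)^{-1/2}$ on its shortest vector. Hence the number of $(m,b)$ is $\ll r^2/y + r/\lambda_1 + 1 \ll \delta L y + \sqrt{\delta L N}\,y + 1$. It then remains to count, for each admissible $(c,m,b)$, the integers $a,d$ with $a-d=m$ and $ad-bc=\ell$. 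This is exactly where I would use that $\ell$ is a perfect square, say $\ell=k^2$: from $t^2-(2k)^2 = m^2+4bc =: D$ one gets the factorisation $(t-2k)(t+2k)=D$, so when $D\neq0$ the divisor bound yields $\ll |D|^\vep\ll N^\vep$ choices of $(t,k)$, i.e. of $(a,d)$ (the entries, and hence $D$, are polynomially bounded in $N$ in the range of interest). Multiplying the three counts and inserting $\delta=N^\vep$ gives the generic contribution $\ll K L^{1/2}N^\vep$ after the elementary estimates below.

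The remaining, and genuinely delicate, case is the degenerate locus $D=m^2+4bc=0$, where the factorisation argument collapses because every $k$ is then allowed. These are precisely the matrices with a repeated eigenvalue, $\gamma=\pm kI+\nu$ with $\nu$ nilpotent and $N\mid(\text{lower-left of }\nu)$; their Möbius action is parabolic, so without further input one could produce $\gg\sqrt L$ of them (for instance the lower-triangular family $\bsm k&0\\ c&k\esm$). The point is that $z\in\mathcal{F}(2N)$ excludes this: completing the square shows $-cz^2+mz+b=-c(z-p/c)^2$ for such a matrix (writing $\nu=\bsm p&q\\ c&-p\esm$), so the condition reads $|cz-p|^2\le2\sqrt\delta\,k\,y\,|c|$, while \eqref{eqn:diophantine} forbids $cz$ from being too close to the integer $p$; together with $N\mid c$ and $N$ squarefree (which, through $c\mid p^2$, forces strong divisibility on $p$) this limits the viable nilpotent directions and keeps the parabolic contribution within $\ll KL^{1/2}N^\vep$. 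I expect this step, controlling the parabolic and diagonal matrices through the separation property of $\mathcal{F}(2N)$, to be the main obstacle, since it is exactly what a naive volume count misses.

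Finally I would assemble the bound. Writing out the product of the three counts produces terms such as $\delta^{3/2}L^{3/2}/N$, $\delta L/\sqrt N$, $\delta L y$, $\sqrt{\delta L N}\,y$ and $1$; using only $y\ge\tfrac{\sqrt3}{4N}$ (so that $Ny\gg1$ and $N^2y^2\gg1$) together with the inequality $1+LNy^2\ge2\sqrt L\,\sqrt N\,y$, each of these is checked to be $\ll (1+LNy^2)L^{1/2}=KL^{1/2}$ up to the harmless factor $\delta^{O(1)}=N^{O(\vep)}$. Renaming $\vep$ then yields the stated uniform bound $\ll K L^{1/2}N^\vep$.
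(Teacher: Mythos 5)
A preliminary remark: the paper itself never proves this lemma; it is quoted, with attribution, as Lemma 4.2 of Harcos--Templier \cite{TemplierSup2}, so your attempt has to be measured against their argument rather than anything in this paper. Your outline does in fact reproduce the standard structure: reduce $u(\gamma z,z)\le\delta$ to $|{-}cz^2+mz+b|^2\le 4\delta\ell y^2$, bound the number of $c$ using $N\mid c$ and $|cz+d|^2\ll\delta L$, count $(m,b)$ as points of the lattice $\Z z+\Z$ whose covolume is $y$ and whose shortest vector is controlled by \eqref{eqn:diophantine}, and then convert the square-determinant hypothesis into the factorisation $(t-2k)(t+2k)=m^2+4bc=:D$ so that the divisor bound disposes of $D\ne 0$. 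That part of your write-up, including the closing term-by-term verification using $Ny\gg1$ and AM--GM, is correct.

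The genuine gap is the degenerate case $D=0$, $c\ne 0$: you assert its admissibility (``\ldots keeps the parabolic contribution within $\ll KL^{1/2}N^\vep$'') and your final sentence explicitly defers the estimate, but this case is precisely the heart of the lemma --- it is the only place where the square determinant, $z\in\mathcal{F}(2N)$, $N\mid c$ and $N$ squarefree must all be used at once, and, as your lower-triangular example shows, omitting it leaves the claimed bound unproved. For the record, the two ingredients you name do close it in a few lines. Since $N$ is squarefree, $N\mid c$ and $c\mid p^2$ force $N\mid p$; write $c=Nc_1$, $p=Np_1$ with $(c_1,p_1)\ne(0,0)$. Your condition $|cz-p|^2\le 2\sqrt{\delta}\,ky|c|$ with $k\le\sqrt L$ becomes $|c_1z-p_1|^2\le 2\sqrt{\delta L}\,y|c_1|/N$; combining with $|c_1z-p_1|\ge |c_1|y$ gives first $|c_1|y\le 2\sqrt{\delta L}/N$ and then $|c_1z-p_1|\le 2\sqrt{\delta L}/N$. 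Thus $c_1z-p_1$ is a nonzero point of the same lattice $\Z z+\Z$ in a disc of radius $2\sqrt{\delta L}/N$, and the lattice count you already invoked gives $\ll 1+\sqrt{\delta L}/\sqrt{N}+\delta L/(N^2y)$ such pairs; the entry $q=-p^2/c$ is then determined, and $k$ takes at most $\sqrt L$ values. Hence the parabolic contribution is $\ll \sqrt L+\sqrt{\delta}\,L/\sqrt N+\delta L^{3/2}/(N^2y)$; the first two terms are $\ll KL^{1/2}N^{O(\vep)}$ exactly as in your list, and the third is as well because $N^3y^3\gg1$ yields $L^{3/2}/(N^2y)\ll L^{3/2}Ny^2$. (The remaining degenerate family $c=0$, $m=0$, i.e.\ the matrices $\bsm k&b\\0&k\esm$ with $|b|\le 2\sqrt{\delta}\,ky$, contributes $\ll\sqrt L\,(1+\sqrt{\delta L}\,y)\ll KL^{1/2}N^{O(\vep)}$ directly.) With this paragraph supplied your proof is complete; without it, you have proved the lemma only for matrices with distinct eigenvalues.
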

		%lemma ends
		%lemma ends
		
		%proof of Theorem 1 begins
		%proof of Theorem 1 begins
		
		\begin{proof}[Proof of Theorem \ref{thm:supBound}]
			With the notation $M(z,\ell,N)$ given above we can rephrase the above lemma as,
			\[
				\sum_{\substack{1\leq \ell \leq L\\ \ell \text{ is a square}}} M(z,\ell, N) \ll (1+LNy^2)L^\hf N^\vep
			\]
	
			Now notice that the summation on the right hand side of \eqref{eq:sumOverMatrices} can be divided into the groups
			\begin{align*}
				\ell = 1, && \Lambda^2 \leq \ell= \ell_1^2 \leq 4\Lambda^2, && \Lambda^4\leq \ell = \ell_1^2\ell_2^2 \leq 16\Lambda^4, \\
				&&\Lambda^6 \leq \ell = \ell_1^4 \ell_2^2 \leq 64\Lambda^6, && \Lambda^8 \leq \ell = \ell_1^4\ell_2^4 \leq 256\Lambda^8.
			\end{align*}
			where $\ell_1, \ell_2$ are arbitrary primes between the quantities $\Lambda$ and $2\Lambda$, perhaps equal. Let us call these sets of integers as $L_i$ with $i = 0,2,4,6,8$ respectively. Thus for $i = 2,4,6,8$ we bound
			\begin{equation*}\label{eq:matricesPartialBound}
				\sum_{\ell \in L_i} \frac{|y_\ell|}{\sqrt{\ell}} \sum_{\gamma \in M(z,\ell, N)} |k(\gamma z, z)| \ll  (1+\Lambda^{i}Ny^2) N^\vep.
			\end{equation*}
			
			As for $i = 0$ we have a different bound,
			\[
				|y_1| \sum_{\gamma \in M(z,1, N)} |k(\gamma z, z)| \ll \Lambda (1 + Ny^2)N^{\vep}.
			\] 
			
			Combining these various bounds we obtain
			\[
				y^\k|\tf(z)|^2 \ll \V \frac{N^\vep}{\Lambda^{2-\vep}} \lt(\Lambda(1+Ny^2) + 4 (1+\Lambda^8 Ny^2) \rt)
			\]
			and furthermore if one assumes that $ y \leq N^{-\frac{8}{9}}$, taking $\Lambda = N^{\frac 19}$ we obtain the bound
				\[
					y^{\frac{\k}{2}}|\tf(z)| \ll N^{\hf-\frac 1{18}+\vep}.
				\]
			
			Notice from Proposition \ref{prop:fourierBound} that  for $y \geq N^{-\frac{8}{9}}$, $$y^{\frac \k2}|\tf(z)| \ll N^{\hf - \frac 1{18}+\vep}. $$ 
			
			Two bounds match up, and we have the correct bound overall. Thus Theorem \ref{thm:supBound} is proven.
		\end{proof}
		%proof of Theorem ends
		%proof of Theorem ends	

\subsection*{Acknowledgements}
	I would like to thank Nicolas Templier for his comments on the manuscript. I also would like to thank the referee for a very thorough and useful report and for pointing out the recent preprint of Abhishek Saha, \cite{saha2014sup}.
		
\bibliographystyle{acm}
\bibliography{MehmetBib}

\begin{thebibliography}{10}

\bibitem{bars2008group}
{\sc Bars, F.}
\newblock The group structure of the normalizer of {$\Gamma_0(N)$} after
  {A}tkin--{L}ehner.
\newblock {\em Communications in Algebra{\textregistered} 36}, 6 (2008),
  2160--2170.

\bibitem{blomer2010bounding}
{\sc Blomer, V., and Holowinsky, R.}
\newblock Bounding sup-norms of cusp forms of large level.
\newblock {\em Inventiones Mathematicae 179}, 3 (2010), 645--681.

\bibitem{jorgenson2013uniform}
{\sc Friedman, J.~S., Jorgenson, J., and Kramer, J.}
\newblock Uniform sup-norm bounds on average for cusp forms of higher weights.
\newblock {\em arXiv preprint arXiv:1305.1348\/} (2013).

\bibitem{TemplierSup2}
{\sc Harcos, G., and Templier, N.}
\newblock On the sup-norm of {M}aass cusp forms of large level: {II}.
\newblock {\em International Mathematics Research Notices\/} (2011).

\bibitem{TemplierSup3}
{\sc Harcos, G., and Templier, N.}
\newblock On the sup-norm of {M}aass cusp forms of large level. {III}.
\newblock {\em Mathematische Annalen\/} (2012), 1--8.

\bibitem{hoffstein1993eisenstein}
{\sc Hoffstein, J.}
\newblock Eisenstein {S}eries and {T}heta {F}unctions on the {M}etaplectic
  {G}roup.
\newblock {\em Theta {F}unctions: {F}rom the {C}lassical to the {M}odern 1\/}
  (1993), 65--104.

\bibitem{iwaniec1995norms}
{\sc Iwaniec, H., and Sarnak, P.}
\newblock {$L^{\infty}$} norms of eigenfunctions of arithmetic surfaces.
\newblock {\em The Annals of Mathematics 141}, 2 (1995), 301--320.

\bibitem{jorgenson2004bounding}
{\sc Jorgenson, J., and Kramer, J.}
\newblock Bounding the sup-norm of automorphic forms.
\newblock {\em Geometric \& Functional Analysis GAFA 14}, 6 (2004), 1267--1277.

\bibitem{kohnen1982newforms}
{\sc Kohnen, W.}
\newblock Newforms of {H}alf-{I}ntegral {W}eight.
\newblock {\em J. reine angew. Math 333\/} (1982), 32--72.

\bibitem{PattersonLaplacian1}
{\sc Patterson, S.~J.}
\newblock The {L}aplacian operator on a {R}iemann surface.
\newblock {\em Compositio math 31}, 1 (1975), 83--107.

\bibitem{purkait2012hecke}
{\sc Purkait, S.}
\newblock Hecke operators in half-integral weight.
\newblock {\em arXiv preprint arXiv:1208.4326\/} (2012).

\bibitem{saha2014sup}
{\sc Saha, A.}
\newblock On sup-norms of cusp forms of powerful level.
\newblock {\em arXiv preprint arXiv:1404.3179\/} (2014).

\bibitem{shimura1973modular}
{\sc Shimura, G.}
\newblock On {M}odular {F}orms of {H}alf {I}ntegral {W}eight.
\newblock {\em Annals of {M}athematics 97}, 3 (1973), pp. 440--481.

\bibitem{templier2012large}
{\sc Templier, N.}
\newblock Large values of modular forms.
\newblock {\em arXiv preprint arXiv:1207.6134\/} (2012).

\bibitem{ueda1993twisting}
{\sc Ueda, M.}
\newblock On twisting operators and newforms of half-integral weight.
\newblock {\em Nagoya Mathematical Journal 131\/} (1993), 135--205.

\end{thebibliography}
\end{document}